\begin{document}
\title*{Applications of multisymmetric syzygies in invariant theory} 

\author{M. Domokos\thanks{Supported by 
National Research, Development and Innovation Office,  NKFIH K 119934.}}

\institute{MTA Alfr\'ed R\'enyi Institute of Mathematics, 
1053 Budapest, Re\'altanoda utca 13-15, Hungary, 
{\it E-mail address: domokos.matyas@renyi.mta.hu } }


\maketitle 


\abstract{A presentation by generators and relations of the $n$th symmetric power $B$  of a  commutative algebra $A$ over a field of characteristic zero or greater than $n$ is given. 
This is applied to get information on a minimal homogeneous generating system of $B$ (in the graded case).  The known result that in characteristic zero 
the algebra $B$ is isomorphic to the coordinate ring of the scheme of $n$-dimensional representations of $A$ is also recovered. The special case when $A$ is the two-variable polynomial algebra and $n=3$ is applied to find generators and relations of an algebra of invariants of the symmetric group of degree four that was studied in connection with the problem of classifying sets of four unit vectors in the Euclidean space. } 

{\keywords{symmetric product, generators and relations, multisymmetric polynomials, trace identities, Cayley-Hamilton theorem}}

\noindent MSC: 13A50, 16R30

\section{Introduction}\label{sec:intro} 

Let $n$ be a positive integer and let $A$ be a commutative $K$-algebra 
(with identity $1$), where $K$ is a field of characteristic zero or $\mathrm{char}(K)>n$. 
Denote by $T^n(A)$ the $n$th tensor power of $A$ on which the symmetric group $S_n$ acts by permuting the tensor factors, and write 
$T^n(A)^{S_n}$ for the subalgebra of $S_n$-invariants. First we give a presentation of this commutative $K$-algebra in terms of generators and relations. 
Note that even if this algebra is finitely generated, we have to take a redundant (typically infinite) generating system that allows a simple description of the relations. 

The key step is to pay attention to  a $K$-vector space basis of $T^n(A)^{S_n}$ 
which comes together with a rule to rewrite products of the generators of $T^n(A)^{S_n}$ into normal form. The rewriting algorithm is  furnished by relations that come uniformly from one master relation. This yields the desired presentation of $T^n(A)^{S_n}$, see Theorem~\ref{thm:firstsecond}. As an application we deduce information on  a minimal generating system of $T^n(A)^{S_n}$ in the case when $A$ is graded, see Corollary~\ref{cor:min_gen_part} and Corollary~\ref{cor:indecomposable}. 
This has relevance for instance for the study of polynomial invariants associated with representations of wreath products, see Section~\ref{sec:wreath}. 

Since the master relation mentioned above comes from the Cayley-Hamilton identity of matrices, as a corollary of Theorem~\ref{thm:firstsecond} we obtain 
Corollary~\ref{cor:gammabar} asserting that 
$T^n(A)^{S_n}$ is isomorphic to the subalgebra   $\mathcal{O}({\mathrm{rep}}(A,n))^{GL(n,K)}$ of $GL(n,K)$-invariants  in the coordinate ring of the scheme of $n$-dimensional representations of $A$ (where $A$ is a finitely generated $K$-algebra and 
$\mathrm{char}(K)=0$). 
This latter result is due to Vaccarino \cite[Theorem 4.1.3]{vaccarino}, who proved it by a different approach. 

In Section~\ref{sec:shapes} we turn to a very concrete application of Theorem~\ref{thm:firstsecond}. Its special case when $A=k[x_1,x_2]$  is a  two-variable polynomial ring was used in  \cite{domokos}, \cite{domokos-puskas} to derive generators of the ideal of relations among a minimal generating system of $T^3(A)^{S_3}$. 
This is applied here to give  a minimal presentation of the ring of invariants 
$R^{S_4}$ of the permutation representation of the symmetric group $S_4$ associated with the action of $S_4$ on the set of two-element subsets of the set $\{1,2,3,4\}$. 
This ring of invariants was studied before by  Aslaksen,  Chan and Gulliksen \cite{aslaksen_etal} because of its relevance for classifying sets of four unit vectors in an euclidean space. A minimal generating system of $R^{S_4}$ was computed in 
\cite{aslaksen_etal}. Here we get simultaneously the generators and relations essentially as a consequence of the minimal presentation of $T^3(K[x_1,x_2])^{S_3}$ mentioned above. 
We note that the ring of invariants $R^{S_4}$ fits into a series that has relevance for the graph isomorphism problem, and has been studied 
for example in \cite{thiery}.

\begin{acknowledgement} I thank Wai-kit Yeung for a question about the topic of Section \ref{sec:semisimple} that inspired me to formulate Theorem~\ref{thm:firstsecond}.
\end{acknowledgement} 


\section{Generators and relations for symmetric tensor powers of a commutative algebra}\label{sec:infinite}

Choose a subset $\mathcal{M}\subset A$ such that  $1\notin \mathcal{M}$ and  $\{1\}\cup\mathcal{M}$  is a $K$-vector space basis in $A$. For $w\in \mathcal{M}$ set 
\begin{equation}\label{eq:[w]} 
[w]:=w\otimes 1\otimes\dots\otimes 1+1\otimes w\otimes 1\otimes \dots\otimes 1+\dots+1\otimes\dots\otimes 1\otimes w. 
\end{equation}

\begin{proposition}\label{prop:basis} 
The products $[w_1]\cdots [w_r]$ with $r\leq n$, $w_i\in {\mathcal{M}}$ 
constitute a $K$-vector space basis of $T^n(A)^{S_n}$. 
\end{proposition}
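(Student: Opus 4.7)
My approach is to compare the proposed family $\{[w_1]\cdots[w_r] : 0 \le r \le n,\ w_i \in \mathcal{M}\}$ with the more transparent \emph{orbit-sum} basis of $T^n(A)^{S_n}$. Since $\{1\}\cup\mathcal{M}$ is a $K$-basis of $A$, the pure tensors $v_1\otimes\cdots\otimes v_n$ with $v_i\in\{1\}\cup\mathcal{M}$ form a $K$-basis of $T^n(A)$ that is permuted by $S_n$; consequently the $S_n$-orbit sums of these pure tensors form a $K$-basis of $T^n(A)^{S_n}$, naturally indexed by multisets $\{w_1,\ldots,w_r\}$ from $\mathcal{M}$ of size $r\le n$ (the remaining $n-r$ slots filled with $1$). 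Write $m(w_1,\ldots,w_r)$ for the orbit sum attached to such a multiset; this will be the reference basis.

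Next I would expand $[w_1]\cdots[w_r]$ by the distributive law. Writing $e_j(w)$ for the elementary tensor with $w$ in slot $j$ and $1$ elsewhere, we have $[w_i]=\sum_{j=1}^n e_j(w_i)$, hence
\[
[w_1]\cdots[w_r]=\sum_{f\colon [r]\to[n]}\bigotimes_{j=1}^n\Bigl(\prod_{i\in f^{-1}(j)}w_i\Bigr),
\]
with empty products interpreted as $1$. I would then split the sum according to whether $f$ is injective. For injective $f$ each tensor slot holds either a single $w_i\in\mathcal{M}$ or $1$, so the corresponding summand is already an honest basis tensor; a straightforward counting shows that every tensor in the $S_n$-orbit of $w_1\otimes\cdots\otimes w_r\otimes 1^{\otimes(n-r)}$ is produced by exactly $\prod_j a_j!$ injective maps, where $a_1,a_2,\ldots$ are the multiplicities of the distinct elements of $\mathcal{M}$ occurring in the list $w_1,\ldots,w_r$. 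Since $a_j\le r\le n$, the hypothesis on $\mathrm{char}(K)$ guarantees this scalar is invertible.

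For a non-injective $f$ the image has size $r'<r$, so the resulting tensor has non-$1$ content in at most $r'$ slots; after re-expanding each internal product $\prod w_i$ in the basis $\{1\}\cup\mathcal{M}$, every basis tensor appearing has at most $r'$ non-$1$ entries. Combined with the obvious $S_n$-invariance of $[w_1]\cdots[w_r]$, the non-injective part is thus a $K$-linear combination of orbit sums $m(w'_1,\ldots,w'_s)$ with $s<r$. Gathering both pieces yields
\[
[w_1]\cdots[w_r] = \Bigl(\prod_j a_j!\Bigr)\,m(w_1,\ldots,w_r) + \sum_{s<r}(\text{scalars})\cdot m(w'_1,\ldots,w'_s).
\]
Ordering the index set by $r$, this exhibits the transition from $\{m(\cdot)\}$ to $\{[w_1]\cdots[w_r]\}$ as triangular with invertible diagonal entries, so the latter family is also a $K$-basis of $T^n(A)^{S_n}$.

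The step I expect to demand the most care is the combinatorial count producing the leading coefficient $\prod_j a_j!$, together with the verification that no cross-cancellation occurs between orbit sums attached to distinct multisets of the same size. Everything else---the orbit-sum basis, the expansion, and the triangularity conclusion---is essentially bookkeeping.
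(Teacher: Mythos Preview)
Your argument is correct and follows essentially the same route as the paper's proof: both compare the products $[w_1]\cdots[w_r]$ to the orbit-sum basis $O_{\{w_1,\ldots,w_r\}}$ (your $m(w_1,\ldots,w_r)$), compute the leading coefficient $\prod_j a_j!$, observe that all remaining terms have strictly smaller height, and conclude by triangularity. Your write-up is simply more explicit about the expansion over maps $f:[r]\to[n]$ and the injective/non-injective split, but the underlying idea is identical.
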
 

\begin{proof} 
The elements $w_1\otimes \dots\otimes w_n$ with $w_i\in \{1\}\cup \mathcal{M}$ constitute a basis in $T^n(A)$, and $S_n$ permutes these basis vectors. 
For a multiset $\{w_1,\ldots,w_r\}$ with $r\leq n$, $w_i\in {\mathcal{M}}$, 
denote  by $O_{\{w_1,\ldots,w_r\}}$ the $S_n$-orbit sum of the monomial 
$w_1\otimes \dots\otimes w_r\otimes 1\otimes\dots\otimes 1$, and call $r$ its {\it height}.  Clearly these elements constitute a basis in $T^n(A)^{S_n}$. 
Assume that the multiset $\{w_1,\ldots,w_r\}$ contains $d$ distinct elements with multiplicities $r_1,\ldots,r_d$ (so $r_1+\cdots+r_d=r$), 
then expanding $[w_1]\cdots [w_r]$ as a linear combination of the above basis elements, 
the coefficient of $O_{\{w_1,\ldots,w_r\}}$ is $r_1!\cdots r_d!$, and all other basis elements contributing have strictly smaller height. 
This clearly shows the claim. 
\end{proof} 

\begin{remark} {\rm The special case when $A=K[x_1,\dots,x_m]$ is a polynomial ring and 
$\mathcal{M}$ is the set of monomials appears in \cite[Proposition 2.5']{gelfand-kapranov-zelevinsky}, 
\cite{berele} (see \cite{berele-regev} for the interpretation needed here), and \cite{domokos}. A formally different but related statement is Lemma 3.2 in \cite{vaccarino}.}  
\end{remark}

Take commuting indeterminates $T_w$ ($w\in\mathcal{M}$), and write $\mathcal{F}$ for the commutative polynomial algebra  
$\mathcal{F}:=K[T_w\mid w\in\mathcal{M}]$. 
Write 
\[\varphi:\mathcal{F}\to T^n(A)^{S_n}\] 
for the $K$-algebra homomorphism given by $T_w\mapsto [w]$ for all $w\in\mathcal{M}$. 

To a multiset $\{w_1,\ldots,w_{n+1}\}$ 
of $n+1$ elements from ${\mathcal{M}}$ we associate an element 
$\Psi_{\{w_1\ldots,w_{n+1}\}}\in{\mathcal{F}}$ as follows. 
Write ${\mathcal{P}}_{n+1}$ for the set of partitions 
$\lambda=\lambda_1\bigcup\dots\bigcup \lambda_h$ 
of the set $\{1,\ldots,n+1\}$ into the disjoint union of non-empty subsets 
$\lambda_i$, and denote $h(\lambda)=h$ the number of parts of the partition $\lambda$. 
Set 
$$\Psi_{\{w_1,\ldots,w_{n+1}\}}=\sum_{\lambda\in{\mathcal{P}}_{n+1}} (-1)^{h(\lambda)}
\prod_{i=1}^{h(\lambda)}\left((|\lambda_i|-1)!\cdot 
T_{\prod_{s\in \lambda_i}w_s}\right),$$ 
where for a general element $a\in A$ (say for $a=\prod_{s\in \lambda_i}w_s$) we write 
\[T_a:=c_0n+\sum_{w\in\mathcal{M}} c_wT_w\] 
provided that  $a=c_0+\sum_{w\in\mathcal{M}}c_ww$, with $c_0,c_w\in K$.

\begin{theorem} \label{thm:firstsecond} 
The $K$-algebra homomorphism $\varphi$ is surjective onto $T^n(A)^{S_n}$, and its kernel is the ideal generated by the 
$\Psi_{\{w_1,\ldots,w_{n+1}\}}$, where $\{w_1,\ldots,w_{n+1}\}$ ranges over all multisets of $n+1$ elements in ${\mathcal{M}}$.  
\end{theorem}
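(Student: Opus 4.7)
Surjectivity of $\varphi$ is immediate from Proposition~\ref{prop:basis}, because $\varphi$ sends $T_{w_1}\cdots T_{w_r}$ to $[w_1]\cdots[w_r]$. Writing $\mathcal{I}\subseteq\mathcal{F}$ for the ideal generated by the $\Psi_{\{w_1,\ldots,w_{n+1}\}}$, the two substantive claims are $\mathcal{I}\subseteq\ker\varphi$ and $\ker\varphi\subseteq\mathcal{I}$.

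\textbf{First inclusion.} My plan is to prove $\varphi(\Psi_{\{w_1,\ldots,w_{n+1}\}})=0$ by a direct combinatorial expansion. For $w\in A$ and $j\in\{1,\ldots,n\}$, let $x_j(w):=1\otimes\cdots\otimes w\otimes\cdots\otimes 1$ denote the element of $T^n(A)$ with $w$ in slot $j$; then $[w]=\sum_{j=1}^n x_j(w)$ and each $x_j\colon A\to T^n(A)$ is a $K$-algebra homomorphism whose image commutes with the image of $x_i$ for $i\neq j$. Expanding each factor in $\prod_i[\prod_{s\in\lambda_i}w_s]$ produces a sum of monomials $\prod_s x_{\pi(s)}(w_s)$ indexed by functions $\pi\colon\{1,\ldots,n+1\}\to\{1,\ldots,n\}$ that are constant on every block of $\lambda$; interchanging summations gives
\[\varphi(\Psi_{\{w_1,\ldots,w_{n+1}\}})=\sum_{\pi}\Bigl(\prod_{s=1}^{n+1}x_{\pi(s)}(w_s)\Bigr)\,C_\pi,\]
where $C_\pi$ is the sum of $(-1)^{h(\lambda)}\prod_i(|\lambda_i|-1)!$ over those $\lambda\in\mathcal{P}_{n+1}$ that refine the set partition of $\{1,\ldots,n+1\}$ whose blocks are the fibres of $\pi$. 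This coefficient factors as $C_\pi=\prod_j f(b_j)$ over the fibre sizes $b_j$ of $\pi$, where $f(m):=\sum_{\sigma\in\mathcal{P}_m}(-1)^{h(\sigma)}\prod_{p\in\sigma}(|p|-1)!$. The exponential formula for set partitions gives
\[\sum_{m\geq 0}\frac{f(m)}{m!}x^m=\exp\Bigl(-\sum_{k\geq 1}\frac{x^k}{k}\Bigr)=1-x,\]
so $f(m)=0$ for $m\geq 2$. Hence $C_\pi$ can be nonzero only when every fibre of $\pi$ has size at most one, i.e.~when $\pi$ is injective---impossible since its domain is larger than its codomain. Therefore $\varphi(\Psi_{\{w_1,\ldots,w_{n+1}\}})=0$.

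\textbf{Second inclusion.} Among the terms of $\Psi_{\{w_1,\ldots,w_{n+1}\}}$, only the all-singletons partition has $h(\lambda)=n+1$, contributing $(-1)^{n+1}T_{w_1}\cdots T_{w_{n+1}}$; every other $\lambda\in\mathcal{P}_{n+1}$ has $h(\lambda)\leq n$, so after expanding the factors $T_{\prod_{s\in\lambda_i}w_s}$ via the rule $T_a=c_0n+\sum c_wT_w$ in the generators $T_w$ ($w\in\mathcal{M}$), the corresponding summand is a polynomial of total degree $\leq n$. Consequently, modulo $\mathcal{I}$ every monomial $T_{w_1}\cdots T_{w_{n+1}}$ of degree $n+1$ is congruent to a polynomial of degree $\leq n$, and iterating this rewriting shows that every element of $\mathcal{F}$ is congruent mod $\mathcal{I}$ to a $K$-linear combination of monomials $T_{w_1}\cdots T_{w_r}$ with $w_i\in\mathcal{M}$ and $r\leq n$. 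By Proposition~\ref{prop:basis} these monomials map under $\varphi$ to a $K$-basis of $T^n(A)^{S_n}$, so $\varphi$ descends to an isomorphism $\mathcal{F}/\mathcal{I}\xrightarrow{\sim}T^n(A)^{S_n}$, whence $\ker\varphi=\mathcal{I}$.

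\textbf{Main obstacle.} The crux is the first inclusion: recognising that $\Psi$ encodes, essentially, a polarised Cayley--Hamilton identity for $n$-dimensional representations, and finding an argument that operates under the paper's hypothesis $\operatorname{char}(K)>n$ rather than requiring the stronger $\operatorname{char}(K)>n+1$ that naive polarisation from the diagonal case $w_1=\cdots=w_{n+1}$ would demand. The exponential-formula computation above accomplishes this directly. Once the first inclusion is in hand, the second is a routine degree-reduction argument whose only real input is Proposition~\ref{prop:basis}.
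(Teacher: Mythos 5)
Your proposal is correct, and while surjectivity and the kernel-containment argument (degree reduction via the leading term $(-1)^{n+1}T_{w_1}\cdots T_{w_{n+1}}$, then Proposition~\ref{prop:basis}) coincide with the paper's, your proof of the first inclusion $\Psi_{\{w_1,\ldots,w_{n+1}\}}\in\ker\varphi$ takes a genuinely different route. The paper quotes the fundamental trace identity $\sum_{\pi\in S_{n+1}}\mathrm{sign}(\pi)\mathrm{Tr}^{\pi}=0$ for $n\times n$ matrices over an arbitrary commutative ring (citing Razmyslov, Procesi, Formanek) and specializes it to the diagonal matrices $\tilde w_i$ over $T^n(A)$; you instead expand $\varphi(\Psi)$ directly over functions $\pi\colon\{1,\dots,n+1\}\to\{1,\dots,n\}$ and kill each coefficient $C_\pi=\prod_j f(b_j)$ by the identity $f(m)=0$ for $m\ge 2$. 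This is in effect a self-contained re-derivation of the trace identity in the only case needed (diagonal matrices), and it is a real gain in elementarity: no external input from PI-theory is required. Two small remarks. First, your stated motivation about characteristic is a non-issue for the paper's route as well: the fundamental trace identity is the \emph{full} multilinearization of Cayley--Hamilton, which introduces no denominators and holds over any commutative ring, so the paper does not secretly need $\mathrm{char}(K)>n+1$ either. Second, your appeal to the exponential formula is a computation over $\mathbb{Q}$, so you should add the (easy) observation that $f(m)$ is an integer, whence its vanishing persists in positive characteristic; even more directly, $f(m)=(-1)^m\sum_{\tau\in S_m}\mathrm{sign}(\tau)=0$ for $m\ge 2$, since the permutations with cycle partition $\sigma$ number $\prod_{p\in\sigma}(|p|-1)!$ and each has sign $(-1)^{m-h(\sigma)}$ --- which also makes transparent that your computation and the paper's trace identity are two faces of the same fact.
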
 

\begin{proof}  Surjectivity of $\varphi$ follows from Proposition~\ref{prop:basis}. The fact 
that $\Psi_{\{w_1,\ldots,w_{n+1}\}}$ belongs to $\ker(\varphi)$ follows from the 
Cayley-Hamilton identity. 
Let $Y(1),\ldots,Y(n+1)$ be  $n\times n$ matrices over an arbitrary commutative ring. 
For a permutation $\pi\in S_{n+1}$ with cycle decomposition  
$$\pi=(i_1\cdots i_d)\cdots (j_1\ldots j_e)$$ 
set 
$${\mathrm{Tr}}^{\pi}={\mathrm{Tr}}(Y(i_1)\cdots Y(i_d))\cdots{\mathrm{Tr}}(Y(j_1)\cdots Y(j_e)).$$ 
Then we have the equality 
\begin{equation}\label{eq:fundtraceid}
\sum_{\pi\in S_{n+1}}{\mathrm{sign}}(\pi){\mathrm{Tr}}^{\pi}=0 
\end{equation} 
called the {\it fundamental trace identity} of $n\times n$ matrices. 
This can be proved by multilinearizing the Cayley-Hamilton identity 
to get an identity multilinear in the $n$ matrix variables $Y(1),\ldots,Y(n)$, and then multiplying by $Y(n+1)$ and taking the trace; see for example \cite{razmyslov}, 
\cite{procesi}, \cite{formanek} for details. 
For $a\in A$ denote by $\tilde{a}$ the diagonal $n\times n$ matrix whose $i$th diagonal entry is $1\otimes\dots\otimes 1\otimes a\otimes 1\otimes\dots\otimes 1$ 
(the $i$th tensor factor is $a$). 
The substitution $Y(i)\mapsto \tilde{w_i}$ ($i=1,\ldots,n+1$) 
in (\ref{eq:fundtraceid}) yields that $\varphi(\Psi_{\{w_1,\ldots,w_{n+1}\}})=0$. 

The coefficient in $\Psi_{\{w_1,\ldots,w_{n+1}\}}$ of the term 
$T_{w_1}\cdots T_{w_{n+1}}$ is $(-1)^{n+1}$, and all other terms are products of 
at most $n$ variables $T_u$. Therefore the relation $\varphi(\Psi_{\{w_1,\ldots,w_{n+1}\}})=0$ can be used to rewrite 
$[w_1]\cdots[w_{n+1}]$ as a linear combination of products of at most $n$ 
invariants of the form $[u]$ (where $u\in{\mathcal{M}}$). 
So these relations are sufficient to rewrite an arbitrary product of the generators $[w]$ in terms of the basis given by Proposition~\ref{prop:basis}. 
This  implies our statement about the kernel of $\varphi$. 
\end{proof} 

\begin{remark}\label{remark:multsym_literature} 
{\rm (i) The ideal of relations among the generators of the algebra of multisymmetric polynomials 
(i.e. the special case $T^n(A)^{S_n}$ with $A=K[x_1,\dots,x_m]$) has been studied by several authors, see \cite{junker1}, \cite{junker2}, \cite{junker3},  \cite{dalbec}, \cite{briand}, \cite{bukhshtaber-rees}, \cite{vaccarino1}.  Slightly more generally, the case of Theorem~\ref{thm:firstsecond} when $A$ is the $q$th Veronese subalgebra of the $m$-variable polynomial algebra $K[x_1,\dots,x_m]$ was given in Theorem 2.5 of \cite{domokos} (whose approach is close to an argument for a different result in \cite{berele}). 
The above proof is a generalization to arbitrary commutative $A$ of the proof of Theorem 2.5 in \cite{domokos}. 

(ii) Although the presentation of $T^n(A)^{S_n}$ given in Theorem~\ref{thm:firstsecond} is infinite, in certain cases an a priori upper bound for the degrees of relations in a minimal presentation is available, and a finite presentation can be obtained from the infinite presentation above (see for instance \cite[Theorem 3.2]{domokos}, building on \cite{derksen}).  
Based on this procedure even a  minimal presentation is worked out in \cite{domokos-puskas} for $T^3(K[x_1,\dots,x_m])^{S_3}$.}
\end{remark}

Suppose that $A$ is generated as a $K$-algebra by the elements $a_1,\dots,a_m$. 
Take the $m$-variable polynomial algebra $K[x_1,\dots,x_m]$ and denote by 
$\rho:K[x_1,\dots,x_n]\to A$ the $K$-algebra surjection given by $\rho(x_i)=a_i$ for $i=1,\dots,m$. This induces a $K$-algebra surjection 
$T^n(\rho):T^n(K[x_1,\dots,x_m])\to T^n(A)$ in the obvious way. 
Since $T^n(\rho)$ is $S_n$-equivariant and $\mathrm{char}(K)$ does not divide $|S_n|$, 
we deduce that it restricts to a $K$-algebra surjections 
$T^n(\rho):T^n(K[x_1,\dots,x_m])^{S_n}\to T^n(A)^{S_n}$. 
Since the algebra $T^n(K[x_1,\dots,x_m])^{S_n}$ is classically known  
to be  generated by 
$[x_{i_1}\dots x_{i_d}]$ where $d\le n$, $1\le i_1\le\dots \le i_d\le m$ 
(see \cite{schlafli},  \cite{macmahon}, \cite{noether}, Chapter II.3 in \cite{weyl}, \cite{stepanov}, 
and for a characteristic free statement see \cite{fleischmann}, \cite{rydh} or  \cite[Corollary 3.14]{vaccarino}),  we obtain the following  known fact: 
\begin{proposition}\label{prop:generators}
The $K$-algebra  $T^n(A)^{S_n}$ is generated by 
\[
\{[a_{i_1}\dots a_{i_d}]\mid \quad d\le n, \quad 1\le i_1\le\dots \le i_d\le m\}.
\]
\end{proposition}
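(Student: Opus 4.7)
The plan is to deduce the statement from the classical result for the polynomial case (explicitly cited in the excerpt) by pulling back through the presentation $\rho : K[x_1,\dots,x_m] \twoheadrightarrow A$ with $\rho(x_i) = a_i$. The paragraph preceding the proposition already essentially outlines this reduction; I would just verify the three easy links in the chain. No appeal to Theorem~\ref{thm:firstsecond} is needed, which keeps the argument light.

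First I would construct the induced map. Define $T^n(\rho) : T^n(K[x_1,\dots,x_m]) \to T^n(A)$ by
\[
f_1 \otimes \cdots \otimes f_n \;\mapsto\; \rho(f_1) \otimes \cdots \otimes \rho(f_n).
\]
This is a surjective $K$-algebra homomorphism (surjectivity follows from surjectivity of $\rho$ and $K$-linearity in each tensor slot) and is $S_n$-equivariant, so it sends $S_n$-invariants to $S_n$-invariants. Next I would upgrade this to a surjection $T^n(\rho) : T^n(K[x_1,\dots,x_m])^{S_n} \twoheadrightarrow T^n(A)^{S_n}$ by a Reynolds operator argument: since $\mathrm{char}(K) = 0$ or $\mathrm{char}(K) > n$, the averaging operator $R = \frac{1}{n!}\sum_{\sigma \in S_n}\sigma$ is a $K$-linear projector onto the invariants on both sides. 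Given $y \in T^n(A)^{S_n}$ and any preimage $z$ under $T^n(\rho)$, the element $R(z)$ is $S_n$-invariant and, by equivariance, satisfies $T^n(\rho)(R(z)) = R(T^n(\rho)(z)) = R(y) = y$.

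Finally I would invoke the classical fact (referenced in the excerpt) that $T^n(K[x_1,\dots,x_m])^{S_n}$ is generated as a $K$-algebra by the elements $[x_{i_1}\cdots x_{i_d}]$ with $d \le n$ and $1 \le i_1 \le \cdots \le i_d \le m$. A one-line check using the definition \eqref{eq:[w]} shows
\[
T^n(\rho)\bigl([x_{i_1}\cdots x_{i_d}]\bigr) \;=\; [a_{i_1}\cdots a_{i_d}],
\]
so applying the surjection $T^n(\rho)$ on invariants sends this generating set onto $\{[a_{i_1}\cdots a_{i_d}] \mid d \le n,\ 1 \le i_1 \le \cdots \le i_d \le m\}$, proving the claim. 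There is no real obstacle here; the only mildly subtle step is the Reynolds operator argument for surjectivity on invariants, which is precisely where the hypothesis $\mathrm{char}(K) = 0$ or $\mathrm{char}(K) > n$ is used.
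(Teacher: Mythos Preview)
Your proof is correct and follows essentially the same approach as the paper: the paragraph immediately preceding Proposition~\ref{prop:generators} constitutes its proof, and you have simply spelled out the Reynolds operator step that the paper compresses into the phrase ``since $T^n(\rho)$ is $S_n$-equivariant and $\mathrm{char}(K)$ does not divide $|S_n|$.'' The paper also notes (in the subsequent remark) an alternative derivation via Theorem~\ref{thm:firstsecond}, but the primary argument is the one you gave.
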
 

\begin{remark} 
Proposition~\ref{prop:generators} follows also directly from Theorem~\ref{thm:firstsecond}, 
since the  relation $\varphi(\Psi_{\{w_1,\ldots,w_{n+1}\}})=0$ can be used to rewrite 
$[w_1\cdots w_{n+1}]$ as a linear combination of products of  
invariants of the form $[u]$ where $u$ is a proper subproduct of $w_1\cdots w_{n+1}$. 
\end{remark} 

Moreover, when $A=k[x_1,\dots,x_m]$ is a polynomial ring, the above generating set is minimal. 
For a general commutative $K$-algebra  $A$  the generating set in Proposition~\ref{prop:generators} may not be minimal. As an application of Theorem~\ref{thm:firstsecond} we shall derive some information on a minimal homogeneous  generating system of $T^n(A)^{S_n}$ when $A$ is graded. 
We shall work with graded algebras 
$R=\bigoplus_{d=0}^\infty R_d$, where $R_0=K$. Set $R_+:=\bigoplus_{d>0}R_d$. 
We say that a homogeneous $r\in R_+$ is {\it indecomposable} if $r\notin (R_+)^2$; that is, $r$ is not contained in the subalgebra generated by lower degree elements. 

\begin{corollary} \label{cor:min_gen_part}  Let $A$ be a graded algebra whose minimal positive degree homogeneous component has degree $q$.  
\begin{itemize}
\item[(i)] Suppose that $b$ is a non-zero homogeneous element in $A$ with $\deg(b)<(n+1)q$.  
Then $[b]$ is an indecomposable element in the graded algebra 
$T^n(A)^{S_n}$ (whose grading is induced by the grading on $A$). 
\item[(ii)]\ Suppose that $\mathcal{B}$ is a $K$-linearly independent subset of $A$ consisting of homogeneous elements of positive degree strictly less than $(n+1)q$.  
Then the set $\{[b]\mid b\in \mathcal{B}\}$ is part of a minimal homogeneous 
$K$-algebra generating system of $T^n(A)^{S_n}$. 
\end{itemize}
\end{corollary}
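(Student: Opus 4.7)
The plan is to use the standard criterion that in a graded connected $K$-algebra $R=K\oplus R_+$, a homogeneous element is indecomposable if and only if its image in $R_+/(R_+)^2$ is nonzero, and a homogeneous subset is part of a minimal generating system if and only if its images in $R_+/(R_+)^2$ are $K$-linearly independent. Thus (i) and (ii) both reduce to the following: for any homogeneous $b\in A_+$ with $\deg b<(n+1)q$, the class of $[b]$ modulo $((T^n(A)^{S_n})_+)^2$ is nonzero, and the assignment $b\mapsto [b]\bmod ((T^n(A)^{S_n})_+)^2$ preserves $K$-linear independence among homogeneous elements of degree $<(n+1)q$.

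First I would choose the basis $\mathcal{M}$ of Theorem~\ref{thm:firstsecond} to consist of homogeneous elements of $A_+$. Then $\mathcal{F}=K[T_w\mid w\in\mathcal{M}]$ becomes a graded polynomial ring via $\deg(T_w):=\deg(w)$, the map $\varphi$ is a graded surjection, and $\ker(\varphi)$ is a graded ideal inside $\mathcal{F}_+$. This yields a graded isomorphism
\[
(T^n(A)^{S_n})_+\big/((T^n(A)^{S_n})_+)^2\;\cong\;\mathcal{F}_+\big/\bigl((\mathcal{F}_+)^2+\ker(\varphi)\bigr),
\]
and the task becomes that of bounding, degree by degree, the image of $\ker(\varphi)$ in $\mathcal{F}_+/(\mathcal{F}_+)^2$.

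The crucial computation runs as follows. Because each generator $\Psi_{\{w_1,\ldots,w_{n+1}\}}$ of $\ker(\varphi)$ already lies in $\mathcal{F}_+$ (every factor $T_{\prod_{s\in\lambda_i}w_s}$ is a pure linear form, since the $w_s$ are homogeneous of positive degree), any element of $\ker(\varphi)$ agrees modulo $(\mathcal{F}_+)^2$ with a $K$-linear combination of these generators. Within the expansion of a single $\Psi_{\{w_1,\ldots,w_{n+1}\}}$, every partition $\lambda$ with $h(\lambda)\ge 2$ contributes a product of at least two linear forms, which vanishes modulo $(\mathcal{F}_+)^2$; only the trivial partition $\lambda=\{1,\ldots,n+1\}$ survives, giving the single term $-n!\,T_{w_1\cdots w_{n+1}}$. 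Since each $\deg(w_i)\ge q$, this surviving term lies in the degree $\deg(w_1\cdots w_{n+1})\ge (n+1)q$ component of $\mathcal{F}_+/(\mathcal{F}_+)^2$. Hence the image of $\ker(\varphi)$ in $\mathcal{F}_+/(\mathcal{F}_+)^2$ is contained in $\bigoplus_{d\ge (n+1)q}\mathrm{span}_K\{T_w\mid w\in\mathcal{M},\ \deg w=d\}$.

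Consequently the subspace $V:=\mathrm{span}_K\{T_w\mid w\in\mathcal{M},\ 0<\deg w<(n+1)q\}$ injects into $\mathcal{F}_+/((\mathcal{F}_+)^2+\ker(\varphi))$. For any homogeneous $b\in A_+$ with $\deg b<(n+1)q$, expanding $b$ in the basis $\mathcal{M}$ expresses $T_b$ as a nonzero element of $V$, so $[b]\notin((T^n(A)^{S_n})_+)^2$; this proves (i). For a linearly independent homogeneous set $\mathcal{B}\subset A_+$ of such elements, the corresponding $T_b$ are linearly independent in $V$ and hence in the quotient, proving (ii). The main obstacle is really just the bookkeeping step of isolating the linear-in-$T$ content of each $\Psi$ and verifying that passing from the $K$-span to the ideal they generate introduces nothing new modulo $(\mathcal{F}_+)^2$.
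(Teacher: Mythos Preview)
Your proof is correct and follows essentially the same strategy as the paper's: choose a homogeneous basis $\mathcal{M}$, invoke Theorem~\ref{thm:firstsecond}, and use that the resulting relations live in degrees $\ge(n+1)q$ to conclude that the variables $T_b$ with $\deg b<(n+1)q$ remain independent modulo $(\mathcal{F}_+)^2+\ker(\varphi)$. The only difference is cosmetic: the paper simply notes that each $\Psi_{\{w_1,\ldots,w_{n+1}\}}$ is homogeneous of degree $\sum_i\deg w_i\ge(n+1)q$, so $\ker(\varphi)$ itself vanishes in every degree below $(n+1)q$, which spares you the (correct but unnecessary) identification of the linear part $-n!\,T_{w_1\cdots w_{n+1}}$ of each $\Psi$.
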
 

\begin{proof} Obviously (i) is a special case of (ii), so we shall prove (ii). 
Extend the set $\mathcal{B}$ to a subset $\mathcal{M}\subset A$ such that 
\begin{enumerate} 
\item $\mathcal{M}$ consists of homogeneous elements of positive degree;  
\item $\{1\}\cup \mathcal{M}$ is a $K$-vector space basis of $A$. 
\end{enumerate} 
This is possible by the assumptions. Consider the corresponding (infinite) presentation of $A$ by generators and relations given in Theorem~\ref{thm:firstsecond}.  Endow $\mathcal{F}$ with a grading such that $\deg(T_w)=\deg(w)$ for all $w\in \mathcal{M}$. 
According to Theorem~\ref{thm:firstsecond} the ideal $\ker(\varphi)$ is generated by 
$\Psi_{\{w_1,\ldots,w_{n+1}\}}$, where $\{w_1,\ldots,w_{n+1}\}$ ranges over all multisets of $n+1$ elements in ${\mathcal{M}}$. 
Clearly $\Psi_{\{w_1,\ldots,w_{n+1}\}}\in \mathcal{F}$ is homogeneous of degree 
$\deg(w_1)+\cdots+\deg(w_{n+1})$. For each $w_i$ we have  
$\deg(w_i)\ge q$, implying 
\[\deg(w_1)+\cdots+\deg(w_{n+1})\ge (n+1)q> \deg(b) \mbox{ for all }b\in\mathcal{B}.\]  
Consequently $\ker(\varphi)$ is a homogeneous ideal generated by elements of degree strictly greater than $\deg(b)$ for any $b\in\mathcal{B}$. 
By the graded Nakayama Lemma (see for example \cite[Lemma 3.5.1]{derksen-kemper}) it is sufficient to show that 
the cosets $\{b+(A_+)^2\mid b\in\mathcal{B}\}$ are $K$-linearly independent 
in the factor space $A/(A_+)^2$. This is equivalent to the condition that 
if 
\begin{equation}\label{eq:h}h=\sum_{b\in\mathcal{B}}c_bT_b\in \ker(\varphi)+(\mathcal{F}_+)^2\mbox{ for some }c_b\in K,\end{equation}
then all coefficients $c_b$ are zero, that is, $h=0$. 
The ideals $\ker(\varphi)$ and $(\mathcal{F}_+)^2$ are  
homogeneous, and since all non-zero homogeneous components of $\ker(\varphi)$ have degree strictly grater than $\deg(b)$ for all $b\in\mathcal{B}$, we deduce from \eqref{eq:h} that 
$h\in (\mathcal{F}_+)^2$. Now $h$ is a linear combination of the variables in the polynomial ring $\mathcal{F}$ (in possibly infinitely many variables),  so this leads to the conclusion $h=0$.  
\end{proof}

Proposition~\ref{prop:generators} and Corollary~\ref{cor:min_gen_part} have the following immediate corollary: 

\begin{corollary} \label{cor:indecomposable} 
Suppose that $A$ is a graded algebra generated by homogeneous elements 
of the same positive degree $q$. Let $\mathcal{B}$ be the union of $K$-vector space bases of the positive degree homogeneous components of $A$ of degree strictly less than 
$(n+1)q$ (for example, $\mathcal{B}$ may consist of a set of products of length at most $n$ of the generators of $A$).  Then $\{[b]\mid b\in\mathcal{B}\}$ is a minimal homogeneous $K$-algebra generating system of $T^n(A)^{S_n}$. 
\end{corollary}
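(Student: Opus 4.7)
My plan is to deduce the corollary by combining the two preceding results: use Proposition~\ref{prop:generators} to produce a generating set, and then apply Corollary~\ref{cor:min_gen_part}(ii) to upgrade our candidate set $\{[b]\mid b\in\mathcal{B}\}$ to a minimal one. The hypothesis that $A$ is generated by homogeneous elements $a_1,\ldots,a_m$ of the same degree $q$ implies that $A_d=0$ unless $d$ is a multiple of $q$, and $A_{dq}$ is spanned by the products $a_{i_1}\cdots a_{i_d}$. So the degrees appearing in $\mathcal{B}$ are exactly $q,2q,\ldots,nq$, and $\mathcal{B}$ is a $K$-basis of $A_q\oplus A_{2q}\oplus\cdots\oplus A_{nq}$.

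Next I would observe that the assignment $a\mapsto[a]$ defined in (\ref{eq:[w]}) is $K$-linear in $a$. In particular, each Proposition~\ref{prop:generators} generator $[a_{i_1}\cdots a_{i_d}]$ with $d\le n$ lies in one of the components $A_{dq}$, and can therefore be written as a $K$-linear combination $\sum_{b\in\mathcal{B}} c_b\,[b]$ using that $\mathcal{B}$ contains a basis of $A_{dq}$. Thus $\{[b]\mid b\in\mathcal{B}\}$ already generates $T^n(A)^{S_n}$ as a $K$-algebra. (The parenthetical remark in the statement---that one may take $\mathcal{B}$ to be a set of length-$\le n$ products of the generators---is the same argument applied in the opposite direction: such a set spans $A_q\oplus\cdots\oplus A_{nq}$, and one may prune it down to a basis.)

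For minimality, $\mathcal{B}$ is by construction a $K$-linearly independent set of homogeneous elements of positive degree strictly less than $(n+1)q$, so Corollary~\ref{cor:min_gen_part}(ii) applies verbatim and tells us that $\{[b]\mid b\in\mathcal{B}\}$ is part of a minimal homogeneous generating system of $T^n(A)^{S_n}$. Since this set already generates the whole algebra, it must itself be a minimal homogeneous generating system.

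There is no real obstacle here; the only point that deserves attention is the $K$-linearity of $a\mapsto [a]$, which is what allows us to pass from the ``product of generators'' generating set of Proposition~\ref{prop:generators} to a basis-indexed one, and the fact that under the hypothesis $A_{dq}$ lies within the degree bound $(n+1)q$ exactly when $d\le n$---which is precisely matched to the bound $d\le n$ in Proposition~\ref{prop:generators}.
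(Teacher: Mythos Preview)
Your argument is correct and matches the paper's own reasoning exactly: the paper simply states that the corollary is an immediate consequence of Proposition~\ref{prop:generators} and Corollary~\ref{cor:min_gen_part}, and you have spelled out precisely how these two results combine. The only detail you add beyond the paper is the explicit observation that $a\mapsto[a]$ is $K$-linear, which is indeed the bridge needed to pass from the product-form generators of Proposition~\ref{prop:generators} to an arbitrary basis $\mathcal{B}$ of $A_q\oplus\cdots\oplus A_{nq}$.
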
 


\section{Wreath products}\label{sec:wreath} 

Corollary~\ref{cor:min_gen_part} and Corollary~\ref{cor:indecomposable} can be applied in invariant theory, one of whose basic targets  is to find a minimal homogeneous $K$-algebra generating system in an {\it  algebra of polynomial invariants} $S(V)^G$. Here $G$ is a group acting on a finite dimensional vector space $V$ via linear transformations, and $S(V)$ is the symmetric tensor algebra of $V$ (i.e. a polynomial algebra with a basis of $V$ as the variables) endowed with the induced $G$-action via $K$-algebra automorphisms, and 
$S(V)^G$ is the subalgebra consisting of the elements fixed by $G$. 

For a group $G$ and a positive integer $n$ the {\it wreath product} $G\wr S_n$ 
is defined as the semidirect product $H\rtimes S_n$, where 
$H=G\times \cdots \times G$ is the direct product of $n$ copies of $G$, and conjugation by  $\sigma\in S_n$ yields  the corresponding  permutation of the direct factors of $H$. 
Given a $G$-module $V$ there is a natural $G\wr S_n$-module structure on 
$V^n=V\oplus\cdots\oplus V$ ($n$ direct summands) given by 
\begin{equation}\label{eq:wreath}(g_1,\dots,g_n,\sigma)\cdot(v_1,\dots,v_n)=(g_1 \cdot v_{\sigma^{-1}(1)},\dots,g_n \cdot v_{\sigma^{-1}(n)}).
\end{equation}
Consider the corresponding {\it algebra $S(V^n)^{G\wr S_n}$ of polynomial invariants}. 
Clearly we have 
$S(V^n)^{G\wr S_n}\subseteq S(V^n)^H\subseteq S(V^n)$. 
Since $H$ is normal in $G\wr S_n$, the subalgebra $S(V^n)^H\subseteq S(V^n)$ is 
$S_n$-stable, and 
$S(V^n)^{G\wr S_n}=(S(V^n)^H)^{S_n}$.  
With the usual identification $S(V^n)=T^n(S(V))$ we obtain 
\[S(V^n)^H=S(V^n)^{G\times \cdots\times G}=T^n(S(V)^G),\]
and formula \eqref{eq:wreath} shows that the action on $S(V^n)$ corresponds to the action on $T^n(S(V))$ via permutation of the tensor factors. 
We conclude the identification 
\[S(V^n)^{G\wr S_n}=T^n(S(V)^G)^{S_n}.\] 
Therefore Corollary~\ref{cor:min_gen_part} and Corollary~\ref{cor:indecomposable} have  the following consequence: 

\begin{corollary}\label{cor:wreath}  
Let $q$ denote the minimal positive degree of a homogeneous element in $S(V)^G$, and 
let $\mathcal{B}$ be the union of  $K$-vector space bases of the homogeneous components of  $S(V)^G$ of positive degree strictly less than $(n+1)q$. 
\begin{itemize}
\item[(i)] 
Then $\{[b]\mid b\in\mathcal{B}\}$ is part of a minimal homogeneous $K$-algebra generating system of $S(V^n)^{G\wr S_n}$. 
\item[(ii)]\  Assume in addition that $S(V)^G$ is generated by its homogeneous component of degree $q$. Then $\{[b]\mid b\in\mathcal{B}\}$ is a minimal homogeneous $K$-algebra 
generating system of $S(V^n)^{G\wr S_n}$. 
\end{itemize}
\end{corollary}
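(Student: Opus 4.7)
The plan is to apply Corollary~\ref{cor:min_gen_part} and Corollary~\ref{cor:indecomposable} to the graded $K$-algebra $A := S(V)^G$, using the identification $S(V^n)^{G\wr S_n} = T^n(S(V)^G)^{S_n}$ that has just been established in the text preceding the statement. No new ideas beyond this translation seem to be needed.

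First I would verify that $A = S(V)^G$ meets the hypotheses of the two earlier corollaries. Since $G$ acts on $V$ by linear transformations, it acts on $S(V)$ by graded $K$-algebra automorphisms, so $A$ inherits a grading $A = \bigoplus_{d\ge 0} A_d$ with $A_0 = K$. By definition of $q$, the minimal positive-degree component of $A$ is $A_q$. In part (ii) the extra assumption says precisely that $A$ is generated by $A_q$, i.e.\ by homogeneous elements of the same positive degree $q$, which is exactly the hypothesis of Corollary~\ref{cor:indecomposable}.

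For part (i), by construction $\mathcal{B}$ is a $K$-linearly independent subset of $A$ consisting of homogeneous elements of positive degree strictly less than $(n+1)q$, linear independence being immediate because $\mathcal{B}$ is the union of bases taken from pairwise distinct graded components. Corollary~\ref{cor:min_gen_part}(ii) applied to this $A$ then yields that $\{[b]\mid b\in\mathcal{B}\}$ is part of a minimal homogeneous $K$-algebra generating system of $T^n(A)^{S_n} = S(V^n)^{G\wr S_n}$. For part (ii), the set $\mathcal{B}$ coincides verbatim with the set appearing in Corollary~\ref{cor:indecomposable} for our $A$, and that corollary upgrades the conclusion to: $\{[b]\mid b\in\mathcal{B}\}$ is already a minimal homogeneous generating system of $T^n(A)^{S_n} = S(V^n)^{G\wr S_n}$.

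The proof is essentially a formality; the substantial work has been done earlier, both in establishing Corollaries~\ref{cor:min_gen_part}--\ref{cor:indecomposable} and in identifying the wreath-product invariant ring with the symmetric tensor power of $S(V)^G$. I expect no real obstacle, only routine bookkeeping to confirm that each hypothesis of the earlier corollaries is directly supplied by the statement of Corollary~\ref{cor:wreath}.
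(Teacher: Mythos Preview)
Your proposal is correct and matches the paper's approach exactly: the paper presents Corollary~\ref{cor:wreath} as an immediate consequence of Corollary~\ref{cor:min_gen_part} and Corollary~\ref{cor:indecomposable} via the identification $S(V^n)^{G\wr S_n}=T^n(S(V)^G)^{S_n}$, with no further argument given. Your verification that the hypotheses transfer is precisely the routine bookkeeping the paper leaves implicit.
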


\begin{example} (i)  Let $G$ be the special linear group $SL_q(K)$ acting by left multiplication on the space $V=K^{q\times r}$ of $q\times r$ matrices. 
Then by classical invariant theory (see \cite{weyl}) we know that $S(V)^G$ is generated by the determinants of $q\times q$ minors, all having degree $q$, so Corollary~\ref{cor:wreath} (ii) applies for $S(V^n)^{G\wr S_n}$. 

(ii) Let $G$ be the cyclic group of order $q$ acting by multiplication by a primitive $q$th root of $1$ on $V=K^m$. In this case the ring $S(V^n)^{G\wr S_n}$ can be interpreted as the ring of vector invariants of some pseudo-reflection group.
Note that  \cite[Theorem 2.5 (ii)]{domokos} is a special case of Corollary~\ref{cor:wreath} (ii). \end{example} 

\section{The scheme of semisimple representations of $A$}\label{sec:semisimple} 

In this section we need to assume that $\mathrm{char}(K)=0$. 
Choose $K$-algebra generators $a_1,\dots,a_m$ of $A$, and consider the $K$-algebra surjection 
$\pi:K\langle x_1,\ldots,x_m\rangle\to A$, $x_i\mapsto a_i$ from the free associative $K$-algebra. 
Take $m$ generic $n\times n$ matrices $X(1),\dots,X(m)$ (their $mn^2$ entries are indeterminates in an $mn^2$-variable polynomial algebra 
$K[x^{(r)}_{ij}\mid 1\le i,j\le n,\quad r=1,\dots,m]$). Take the factor of this polynomial algebra by the ideal generated by all entries of $f(X(1),\dots,X(m))$, where $f$ ranges over $\ker(\pi)$ (in particular, the entries of $X(r)X(s)-X(s)X(r)$ are among the generators of this ideal). This algebra is  the {\it coordinate ring} $\mathcal{O}({\mathrm{rep}}(A,n))$ of the {\it scheme }${\mathrm{rep}}(A,n)$ {\it of $n$-dimensional representations of} $A$ (by definition of this scheme). 
Denote by $Y(1),\dots,Y(m)$ the images in the $n\times n$ matrix algebra over $\mathcal{O}({\mathrm{rep}}(A,n))$ of the generic matrices $X(1),\dots,X(m)$. 
Then the $K$-algebra surjection $\rho:K\langle x_1,\dots,x_m\rangle \to \mathcal{O}({\mathrm{rep}}(A,n))^{n\times n}$ given by $x_i\mapsto Y(i)$ factors through 
a $K$-algebra homomorphism $\bar\rho:A\to \mathcal{O}({\mathrm{rep}}(A,n))^{n\times n}$. 
Recall that the conjugation action of the general linear group $GL(n,K)$ on $n\times n$ matrices induces an action (via $K$-algebra automorphisms) on 
$\mathcal{O}({\mathrm{rep}}(A,n))$. 
Consider the $K$-algebra homomorphism  
\[\gamma:\mathcal{F}\to \mathcal{O}({\mathrm{rep}}(A,n)) \mbox{ given 
by }T_w\mapsto \mathrm{Tr}(\bar\rho(w)).\]  

\begin{corollary}\label{cor:gammabar} 
The $K$-algebra homomorphism $\gamma$ factors through an isomorphism 
$\overline{\gamma}:T^n(A)^{S_n}\to \mathcal{O}({\mathrm{rep}}(A,n))^{GL(n,K)}$
 (so $\gamma=\overline{\gamma}\circ\varphi$). 
 \end{corollary}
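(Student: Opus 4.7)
The plan is to factor $\gamma$ through $\varphi$ to obtain $\overline{\gamma}$, and then to verify that $\overline{\gamma}$ is a bijection onto the ring of $GL(n,K)$-invariants.

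For the factorization, I would show that each generator $\Psi_{\{w_1,\ldots,w_{n+1}\}}$ of $\ker(\varphi)$ (by Theorem~\ref{thm:firstsecond}) lies also in $\ker(\gamma)$. Substituting $Y(i)\mapsto\bar\rho(w_i)$ into the fundamental trace identity \eqref{eq:fundtraceid} over the commutative ring $\mathcal{O}(\mathrm{rep}(A,n))$ and expanding via the cycle structure of $\pi\in S_{n+1}$ yields exactly $\gamma(\Psi_{\{w_1,\ldots,w_{n+1}\}})=0$, by the same bookkeeping used in the proof of Theorem~\ref{thm:firstsecond}. Thus $\overline{\gamma}\colon T^n(A)^{S_n}\to\mathcal{O}(\mathrm{rep}(A,n))$ is well defined, and its image lies in $\mathcal{O}(\mathrm{rep}(A,n))^{GL(n,K)}$ because $\mathrm{Tr}$ is conjugation-invariant.

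Surjectivity onto the invariants follows from Procesi's first fundamental theorem of matrix invariants: in characteristic zero, $\mathcal{O}(\mathrm{rep}(A,n))^{GL(n,K)}$ is generated by traces of products $Y(i_1)\cdots Y(i_d)$ of the generic matrices. Because $A$ is commutative, the $Y(i)$'s commute pairwise in $\mathcal{O}(\mathrm{rep}(A,n))^{n\times n}$, so each such product equals $\bar\rho(a_{i_1}\cdots a_{i_d})$, and its trace equals $\overline{\gamma}([a_{i_1}\cdots a_{i_d}])$.

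Injectivity is the main obstacle, and I would tackle it via the second fundamental theorem of Procesi--Razmyslov: every trace identity of $n\times n$ matrices is a consequence of the fundamental trace identity~\eqref{eq:fundtraceid}. Concretely, I would first treat the free commutative case $A_0=K[x_1,\ldots,x_m]$, where Procesi--Razmyslov (combined with exactness of $GL(n,K)$-invariants in characteristic zero) yields a presentation of $\mathcal{O}(\mathrm{rep}(A_0,n))^{GL(n,K)}$ by generators $T_w$ indexed by monomials in $a_1,\ldots,a_m$ modulo the $\Psi$-relations, matching exactly the presentation of $T^n(A_0)^{S_n}$ from Theorem~\ref{thm:firstsecond}; hence $\overline{\gamma}_0$ is an isomorphism. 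For general $A=A_0/\ker(\rho)$, I would fit $\overline{\gamma}_0$ and $\overline{\gamma}$ into a commutative square whose vertical arrows are the natural surjections $T^n(A_0)^{S_n}\twoheadrightarrow T^n(A)^{S_n}$ and $\mathcal{O}(\mathrm{rep}(A_0,n))^{GL(n,K)}\twoheadrightarrow\mathcal{O}(\mathrm{rep}(A,n))^{GL(n,K)}$ (the latter surjective by linear reductivity of $GL(n,K)$), verify that the two kernels correspond under $\overline{\gamma}_0$ (both are generated by the traces of products involving elements of $\ker(\rho)$), and conclude injectivity of $\overline{\gamma}$ by a diagram chase.
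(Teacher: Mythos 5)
Your first two steps coincide with the paper's proof: the fundamental trace identity, evaluated over the commutative ring $\mathcal{O}(\mathrm{rep}(A,n))$, shows that every $\Psi_{\{w_1,\dots,w_{n+1}\}}$ lies in $\ker(\gamma)$, so $\ker(\varphi)\subseteq\ker(\gamma)$ by Theorem~\ref{thm:firstsecond} and $\overline\gamma$ exists; and surjectivity onto the invariants follows from the first fundamental theorem for matrix invariants (the paper cites Sibirskii, using linear reductivity of $GL(n,K)$ in characteristic zero) together with commutativity of the $Y(i)$.

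The gap is in your injectivity argument. The Procesi--Razmyslov second fundamental theorem presents the trace ring of \emph{free} generic matrices; here the generic matrices are constrained to commute, i.e.\ one works in $K[x^{(r)}_{ij}]/I$ where $I$ is generated by the entries of the commutators $X(r)X(s)-X(s)X(r)$. To conclude that $\mathcal{O}(\mathrm{rep}(A_0,n))^{GL(n,K)}$ is presented over the symbols $T_w$ ($w$ a commutative monomial) by the $\Psi$-relations, you must in addition (a) identify $I^{GL(n,K)}$ as an ideal of the trace ring --- by a Reynolds-operator/FFT argument it is generated by the elements $\mathrm{Tr}\bigl(W(X(r)X(s)-X(s)X(r))\bigr)$ for words $W$ --- and (b) verify that, after abelianizing the word indices, these extra relations impose nothing beyond the identifications $T_W=T_{W'}$ for words with equal abelianization, while the free trace relations descend exactly to the $\Psi$'s. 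That verification \emph{is} the free-commutative case of the corollary, and asserting it ``matches exactly'' hides the entire difficulty; the same issue reappears in your diagram chase, where ``the two kernels correspond'' requires the analogous description of the invariant part of the ideal cutting out $\mathrm{rep}(A,n)$ inside $\mathrm{rep}(A_0,n)$. The argument can likely be completed this way, but it is long and needs the second fundamental theorem, which the paper avoids entirely. The paper's device is an explicit retraction: the algebra map $\beta$ sending $X(r)$ to the diagonal matrix $\tilde a_r$ over $T^n(A)$ (with $j$th diagonal entry $1\otimes\cdots\otimes a_r\otimes\cdots\otimes 1$) annihilates the defining ideal of $\mathcal{O}(\mathrm{rep}(A,n))$, hence induces $\overline\beta:\mathcal{O}(\mathrm{rep}(A,n))\to T^n(A)$ with $\overline\beta(\mathrm{Tr}(\bar\rho(w)))=[w]$; thus $\overline\beta\circ\overline\gamma$ is the identity on $T^n(A)^{S_n}$ and injectivity is immediate. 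Either adopt this retraction or supply the missing analysis of the commutator ideal's invariant part.
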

 
 \begin{proof} Since the fundamental trace identity holds for matrices over the commutative ring $\mathcal{O}({\mathrm{rep}}(A,n))$, 
 we conclude that all the $\Psi_{\{w_1,\dots,w_{n+1}\}}$ belong to $\ker(\gamma)$. By Theorem~\ref{thm:firstsecond} these elements generate the ideal $\ker(\varphi)$, hence 
 $\ker(\gamma)\supseteq \ker(\varphi)$, implying the existence of a $K$-algebra homomorphism $\overline{\gamma}$ with $\gamma=\overline{\gamma}\circ\varphi$. 
 
 The homomorphism $\gamma$ (and hence $\overline{\gamma}$) is surjective onto $\mathcal{O}({\mathrm{rep}}(A,n))^{GL(n,K)}$ because the algebra of $GL(n,K)$-invariants is generated by 
 traces of monomials in the generic matrices $Y(1),\dots,Y(m)$ (this follows from \cite{sibirskii} since the characteristic of $K$ is zero, hence $GL(n,K)$ is linearly reductive). 

Define $\beta: K[x^{(r)}_{ij}\mid 1\le i,j\le n,\quad r=1,\dots,m]\to T^n(A)$ as the $K$-algebra homomorphism given by $X(r)\mapsto \tilde{a_r}$ (we use the notation of the proof of Theorem~\ref{thm:firstsecond}, so $\tilde{a_r}$ is a diagonal $n\times n$ matrix over $T^n(A)$  whose $j$th diagonal entry is $1\otimes\dots\otimes 1\otimes a_r\otimes 1\otimes\dots\otimes 1$ 
(the $j$th tensor factor is $a_r$). 
Since $f(\tilde{a_1},\dots,\tilde{a_m})=0$ for any $f\in\ker(\pi)$, we conclude that $\beta$ factors through a homomorphism $\overline{\beta}:\mathcal{O}({\mathrm{rep}}(A,n))\to T^n(A)$ inducing $Y(i)\mapsto \tilde{a_i}$. 
It is easy to see that $\overline{\beta}(\overline{\gamma}([w])=\overline{\beta}(\mathrm{Tr}(\bar\rho(w)))=[w]$, hence $\overline{\beta}\circ\overline{\gamma}$ is the identity map 
on $T^n(A)^{S_n}$. This shows that $\overline{\gamma}$ is surjective as well, and hence it is an isomorphism. 
 \end{proof} 
 
The isomorphism $T^n(A)^{S_n}\cong \mathcal{O}({\mathrm{rep}}(A,n))^{GL(n,K)}$ 
is a result of  Vaccarino \cite[Theorem 4.1.3]{vaccarino}. The proof given above is different, and it is an adaptation  of the proof of  \cite[Theorem 4.1]{domokos}.  The special case when $A$ is a polynomial ring is discussed also in 
\cite{gan-ginzburg} and \cite{procesi:2}. 
 Motivated by \cite{artin} we call $\mathcal{O}({\mathrm{rep}}(A,n))^{GL(n,K)}$ the {\it coordinate ring of the scheme of semisimple $n$-dimensional representations of $A$}. 


\section{The symmetric group acting on pairs} \label{sec:shapes} 

Write $\binom{[n]}{2}$ for the set of two-element subsets of $\{1,\dots,n\}$. 
The symmetric group $S_n$ acts on the $\binom n2$-variable polynomial algebra 
\[R_n=K[x_{\{i,j\}}\mid \{i,j\}\in \binom{[n]}{2}]\] 
as 
\[\sigma \cdot x_{\{i,j\}}=x_{\{\sigma(i),\sigma(j)\}} \quad \mbox{ for }\sigma\in S_n.\]  
The subalgebra  $R_n^{S_n}$ was studied for two reasons, the first comes from graph theory. 
Given a simple graph $\Gamma$ with vertex set $\{1,\dots,n\}$ and a polynomial $f\in R_n$ 
denote by $f(\Gamma)$ the value of $f$ under the substitution 
\[x_{\{i,j\}}\mapsto \begin{cases} 1 \mbox{ if }\{i,j\}\mbox{ is an edge in }\Gamma \\
0\mbox{ otherwise}.\end{cases}\]
Suppose that $f_1,\dots,f_r$ generate the $K$-algebra $R_n^{S_n}$. 
The following statement is well known (see for examle \cite[Lemma 5.5.1]{derksen-kemper}): 
\begin{proposition}\label{prop:graph-isomorphism}
The graphs $\Gamma$ and $\Gamma'$ on the vertex set $\{1,\dots,n\}$ are isomorphic 
if and only if $f_i(\Gamma)=f_i(\Gamma')$ for all $i=1,\dots,r$. 
\end{proposition}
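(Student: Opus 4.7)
The plan is to translate graphs into points of the affine space $K^{\binom{[n]}{2}}$ and invoke the classical fact that the algebra of polynomial invariants of a finite group whose order is invertible in $K$ separates orbits.

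First I would set up the dictionary. To each graph $\Gamma$ on $\{1,\dots,n\}$ assign the point $p_\Gamma\in K^{\binom{[n]}{2}}$ whose $\{i,j\}$-coordinate is $1$ if $\{i,j\}$ is an edge of $\Gamma$ and $0$ otherwise, so that $f(\Gamma)$ coincides with the evaluation $f(p_\Gamma)$. A short computation from the definition of the $S_n$-action on $R_n$ shows that $\sigma\cdot p_\Gamma=p_{\sigma\Gamma}$, where $\sigma\Gamma$ is the graph with edge set $\{\{\sigma(i),\sigma(j)\}\colon\{i,j\}\in E(\Gamma)\}$. Consequently $\Gamma\cong\Gamma'$ is equivalent to $p_\Gamma$ and $p_{\Gamma'}$ lying in a common $S_n$-orbit. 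The ``only if'' direction is now immediate: pick $\sigma$ with $p_{\Gamma'}=\sigma\cdot p_\Gamma$, and use $S_n$-invariance of $f_i$ to conclude $f_i(\Gamma')=f_i(\sigma\cdot p_\Gamma)=f_i(p_\Gamma)=f_i(\Gamma)$.

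For the converse, suppose $f_i(\Gamma)=f_i(\Gamma')$ for every $i$. Since $f_1,\dots,f_r$ generate $R_n^{S_n}$, this extends to $f(\Gamma)=f(\Gamma')$ for every $f\in R_n^{S_n}$. I would argue by contradiction that the orbits $S_n\cdot p_\Gamma$ and $S_n\cdot p_{\Gamma'}$ must then coincide. If they were disjoint, their union would be a finite subset of $K^{\binom{[n]}{2}}$, and Lagrange interpolation would produce some $h\in R_n$ identically equal to $1$ on $S_n\cdot p_\Gamma$ and to $0$ on $S_n\cdot p_{\Gamma'}$. Since $n!$ is invertible in $K$ by the paper's standing hypothesis on $\mathrm{char}(K)$, the Reynolds average $f:=\tfrac{1}{n!}\sum_{\sigma\in S_n}\sigma\cdot h$ lies in $R_n^{S_n}$ and, each orbit being $S_n$-stable, still takes value $1$ at $p_\Gamma$ and $0$ at $p_{\Gamma'}$, contradicting the hypothesis.

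There is no genuine obstacle here; this is essentially the standard orbit-separation argument for nonmodular finite group actions. The only thing to verify with care is that the $S_n$-action on the coordinate ring $R_n$ as defined in the text corresponds to the permutation action on $K^{\binom{[n]}{2}}$ for which graph isomorphism and orbit equivalence of the associated $\{0,1\}$-points coincide.
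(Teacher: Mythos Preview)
Your argument is correct and is precisely the standard orbit-separation argument for invariants of a finite group in the nonmodular case. The paper itself does not supply a proof of this proposition at all: it simply records the statement as well known and points to \cite[Lemma 5.5.1]{derksen-kemper}, so there is nothing to compare against beyond noting that your proof is essentially the one found in that reference. Your care in checking that the $S_n$-action on $R_n$ matches the permutation action on $\{0,1\}$-points (so that graph isomorphism coincides with orbit equivalence) and in invoking the standing hypothesis $\mathrm{char}(K)=0$ or $\mathrm{char}(K)>n$ to justify the Reynolds averaging is appropriate.
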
 
 The second motivation to study $R_n^{S_n}$ comes from the problem of classifying sets of $n$ unit vectors in the Euclidean space $\mathbb{R}^m$. We refer to \cite{aslaksen_etal} for the details (see also \cite[Section 5.10.2]{derksen-kemper}).  
 
From now on we focus on the case $n=4$ and set $R:=R_4$. 
For our purposes a more convenient generating system of $R$ is 
$x_1,x_2,x_3,z_1,z_2,z_3$ where 
\[x_1=x_{\{1,2\}}+x_{\{3,4\}},\quad x_2=x_{\{1,3\}}+x_{\{2,4\}},
\quad x_3=x_{\{1,4\}}+x_{\{2,3\}}\]
\[z_1=x_{\{1,2\}}-x_{\{3,4\}},\quad z_2=x_{\{1,3\}}-x_{\{2,4\}},
\quad z_3=x_{\{1,4\}}-x_{\{2,3\}}.\]
We shall use the notation of Section~\ref{sec:infinite}. Take for $A$ the polynomial algebra  $K[x,z]$. Identify $T^3(A)$ with $R$ as follows:  
\[x_1=x\otimes 1\otimes 1,\quad x_2=1\otimes x\otimes 1,\quad x_3=1\otimes 1\otimes x\] 
\[z_1=z\otimes 1 \otimes 1,\quad z_2=1\otimes z\otimes 1,\quad z_3=1\otimes 1\otimes z.\] 
Consequently we have 
\[x_1+x_2+x_3=[x],\quad z_1^2+z_2^2+z_3^2=[z^2],\quad x_1z_1^2+x_2z_2^2+x_3z_3^2=[xz^2], \quad \mbox{etc.}\]

\begin{theorem}~\label{thm:shape_relations} 
\begin{itemize}
\item[(i)] 
The algebra $R^{S_4}$ is  generated by the ten elements 
\[[x],\ [x^2],\ [x^3],\ [z^2],\ [z^4],\ [z^6],\ [xz^2],\ [x^2z^2],\ [xz^4],\ z_1z_2z_3.\] 
\item[(ii)]\  
Consider the surjective $K$-algebra homomorphism 
$\phi$ from the ten-variable polynomial algebra 
\[\mathcal{F}=K[T_w,S \mid w\in\{x,x^2,x^3,y,y^2,y^3,xy,x^2y,xy^2\}]\] 
onto $R^{S_4}$ given by 
\[\phi(S)= z_1z_2z_3\quad  \mbox{ and }\quad  \phi(T_w)= [\psi(w)],\] 
where $\psi:K[x,y]\to K[x,z]$ is the $K$-algebra homomorphism 
mapping $x\mapsto x$ and  $y\mapsto z^2$. 
The kernel of $\phi$ is minimally generated (as an ideal) 
by the element
\begin{equation}\label{eq:S^2}
S^2-\frac 13 T_{y^3}+ \frac 12 T_{y^2}T_{y}-\frac 16 T_{y}^3
\end{equation}  
and the five elements (given explicitly in the proof) 
\begin{equation}\label{eq:J}
J_{3,2},\quad J_{2,3}, \quad J_{4,2}, \quad J_{3,3}, \quad J_{2,4}.
\end{equation}   
\end{itemize}
\end{theorem}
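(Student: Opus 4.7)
The plan is to exploit the normal Klein four subgroup $V=\{e,(12)(34),(13)(24),(14)(23)\}\trianglelefteq S_4$, whose quotient $S_4/V$ is isomorphic to $S_3$, and to compute $R^{S_4}=(R^V)^{S_3}$ in two stages. A direct calculation with the formulas defining $x_i$ and $z_i$ shows that $V$ fixes each $x_i$ and acts on $(z_1,z_2,z_3)$ by diagonal sign changes with an even number of sign flips, while the quotient $S_3$ acts on $\{x_1,x_2,x_3\}$ and on $\{z_1^2,z_2^2,z_3^2\}$ by the same permutation of indices and fixes the product $z_1z_2z_3$.

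Since the $V$-invariants in $K[z_1,z_2,z_3]$ are generated by $z_1^2,z_2^2,z_3^2,z_1z_2z_3$ subject only to the single relation $(z_1z_2z_3)^2=z_1^2z_2^2z_3^2$, the ring $R^V$ is a free $K[x_1,x_2,x_3,z_1^2,z_2^2,z_3^2]$-module of rank two with basis $\{1,\,z_1z_2z_3\}$. Taking $S_3$-invariants and using that $z_1z_2z_3$ is fixed yields
\[
R^{S_4}=B\oplus B\cdot z_1z_2z_3, \qquad B:=K[x_1,x_2,x_3,z_1^2,z_2^2,z_3^2]^{S_3}.
\]
Under the substitution $y\mapsto z^2$ (i.e.\ via $\psi$), the algebra $B$ is identified with $T^3(K[x,y])^{S_3}$, the algebra of multisymmetric polynomials in two variables for $n=3$.

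From here both claims follow by importing the minimal presentation of $T^3(K[x,y])^{S_3}$ from \cite{domokos-puskas}: a minimal homogeneous generating system is $[x],[x^2],[x^3],[y],[y^2],[y^3],[xy],[x^2y],[xy^2]$, and the kernel of the associated polynomial surjection is minimally generated by five explicit homogeneous elements which we label $J_{3,2},J_{2,3},J_{4,2},J_{3,3},J_{2,4}$. Pulling back through $\psi$ gives the nine generators listed in part~(i), and adjoining $z_1z_2z_3$ yields the tenth. For part~(ii), the only additional relation beyond the $J_{i,j}$ is the quadratic relation $S^2=z_1^2z_2^2z_3^2$ coming from $R^V$. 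Expressing $z_1^2z_2^2z_3^2=e_3(z_1^2,z_2^2,z_3^2)$ via the Newton identity
\[
e_3=\tfrac{1}{6}p_1^3-\tfrac{1}{2}p_1p_2+\tfrac{1}{3}p_3
\]
with $p_k=[z^{2k}]=\phi(T_{y^k})$ produces exactly the element displayed in \eqref{eq:S^2}.

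The main obstacle is establishing minimality of the presentation. Since $S$ does not appear in any $J_{i,j}$, the relation \eqref{eq:S^2} is independent of them; conversely, because \eqref{eq:S^2} is the unique generator of $\ker(\phi)$ involving $S$, any redundancy among the $J_{i,j}$'s modulo $(\ker\phi)^2$ would descend to a redundancy in the already minimal presentation of $B$, contradicting \cite{domokos-puskas}. Minimality of the extra generator $z_1z_2z_3$ follows directly from the decomposition $R^{S_4}=B\oplus B\cdot z_1z_2z_3$, which shows that $z_1z_2z_3$ is not a polynomial in the $[\psi(w)]$'s. The remaining work is the routine (but not automatic) bookkeeping to extract the five polynomials $J_{i,j}$ in explicit form, which should be recorded in the proof to make the statement self-contained.
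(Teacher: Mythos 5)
Your proposal follows essentially the same route as the paper's proof: the Klein four subgroup $H=V$, the rank-two free module decomposition $R^{S_4}=P^{S_3}\oplus P^{S_3}z_1z_2z_3$, the identification of $P^{S_3}$ with $T^3(K[x,y])^{S_3}$ via $y\mapsto z^2$, importing the minimal presentation of the latter from \cite{domokos-puskas}, and Newton's formula for the relation \eqref{eq:S^2}. The argument is correct; the paper simply records the five polynomials $J_{i,j}$ explicitly (organized as $GL_2(K)$-module orbits of $J_{3,2}$ and $J_{4,2}$), which is the bookkeeping you defer.
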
 

\begin{proof} Denote by $H$ the abelian normal subgroup of $S_4$ consisting of the three double transpositions and the identity. The variables $x_1,x_2,x_3$ are $H$-invariant whereas $z_1,z_2,z_3$ span $H$-invariant subspaces on which $H$ acts via its three non-trivial characters. It follows that the subalgebra $R^H$ is generated by 
\[x_1,x_2,x_3,z_1^2,z_2^2,z_3^2,z_1z_2z_3.\] 
Denote by $P$ the subalgebra of $R^H$ generated by the six algebraically independent elements $x_1,x_2,x_3,z_1^2,z_2^2,z_3^2$. The square of the remaining generator 
$z_1z_2z_3$ belongs to $P$, hence 
\begin{equation}\label{eq:rk2free} 
R^H=P\oplus Pz_1z_2z_3
\end{equation} 
is a rank two free $P$-module. 
 Since $H$ is normal, $R^H$ is an $S_4$-stable subalgebra of $R$, and the action of $S_4$ on $R^H$ factors through an action of $S_4/H\cong S_3$. More concretely, 
 $S_4$ permutes the elements $x_1,x_2,x_3$ and it permutes the elements $z_1,z_2,z_3$ up to sign. In fact $z_1z_2z_3$ is $S_4$-invariant (as one can easily check) and there exists a surjective group homomorphism $\pi:S_4\to S_3$ (with kernel $H$) such that 
 for any $g\in S_4$ we have 
\[g\cdot x_i=x_{\pi(g)(i)}, \quad g\cdot z_i^2=z_{\pi(g)(i)}^2 \quad (i=1,2,3).\] 
This shows in particular that $P$ is an $S_3=S_4/H$-stable subalgebra of $R^H$, and since 
$z_1z_2z_3$ is  $S_3$-invariant, we deduce from \eqref{eq:rk2free} that 
\begin{equation}\label{eq:rk2invariant} 
R^{S_4}=(R^{H})^{S_3} =P^{S_3}\oplus P^{S_3}z_1z_2z_3
\end{equation} 
is a rank two $P^{S_3}$-module. 
Denote by $\phi'$ the restriction of $\phi$ to the nine-variable polynomial algebra 
\[\mathcal{F}'=K[T_w\mid  w\in\{x,x^2,x^3,y,y^2,y^3,xy,x^2y,xy^2\}].\] 
Then $\phi':\mathcal{F}'\to P^{S_3}$ is a $K$-algebra homomorphism. 
The $K$-algebra homomorphism $\psi:K[x,y]\to K[x,z]$ induces an isomorphism 
\[\tilde\psi:T^3(K[x,y])\to P, \quad x_i\mapsto x_i, \quad y_i\mapsto z_i^2, \quad i=1,2,3\] 
(where we identify $T^3(K[x,y])$ with  $K[x_1,x_2,x_3,y_1,y_2,y_3]$ similarly to the identification of $T^3(K[x,z])$ and $R$). 
Therefore we have 
\begin{equation} \label{eq:fi=mu}\phi'= \tilde\psi \circ \mu
\end{equation} 
where $\mu$ stands for the $K$-algebra surjection 
\[\mu:\mathcal{F}'\to  T^3(K[x,y])^{S_3}, \quad T_w\mapsto [w] \mbox{ for } 
w\in\{x,x^2,x^3,y,y^2,y^3,xy,x^2y,xy^2\}\]
studied in \cite{domokos} and \cite{domokos-puskas}.  In particular, since 
$T^3(K[x,y])^{S_3}$ is known  to be minimally generated by the elements 
$[w]$ with  $w\in \{x,x^2,x^3,y,y^2,y^3,xy,x^2y,xy^2\}$, 
the $K$-algebra homomorphisms $\mu$ and hence $\phi'$ are surjective onto  
$T^3(K[x,y])^{S_3}$, respectively onto $P^{S_3}$. 
By \eqref{eq:rk2invariant} statement (i) of Theorem~\ref{thm:shape_relations} follows. 
Moreover, by \eqref{eq:fi=mu} we have 
\[\ker(\phi')=\ker(\mu).\] 
It was explained first in \cite{domokos} how to deduce from a special case of 
Theorem~\ref{thm:firstsecond} a minimal generating system of $\ker(\mu)$. 
Later in \cite{domokos-puskas}  a natural action of the general linear group $GL_2(K)$ 
was taken into account and it was proved that $\ker(\mu)$ is minimally generated as a 
$GL_2(K)$-ideal by $J_{3,2}$ and $J_{4,2}$ given as follows: 
\begin{align*}
J_{3,2}&=6T_{x^2y}T_{xy}-3T_{xy^2}T_{x^2}-2T_{x^2y}T_{x}T_{y}
+T_{xy^2}T_{x}^2-4T_{xy}^2T_{x}
\\&+2T_{xy}T_{x}^2T_{y}
-3T_{x^3}T_{y^2}+4T_{x^2}T_{x}T_{y^2}-T_{x}^3T_{y^2}
+T_{x^3}T_{y}^2-T_{x^2}T_{x}T_{y}^2
\end{align*} 
\begin{align*}J_{4,2}&=6T_{x^2y}^2+T_{xy}^2T_{x^2}-3T_{xy}^2T_{x}^2-6T_{x^3}T_{xy^2}
+2T_{x^2}T_{xy^2}T_{x}
\\&+4T_{x^3}T_{xy}T_{y}
-2T_{x^2}T_{xy}T_{x}T_{y}+2T_{xy}T_{x}^3T_{y}-4T_{x^2y}T_{x^2}T_{y}
-T_{x^2}^2T_{y^2}
\\&+T_{x^2}^2T_{y}^2+4T_{x^2}T_{x}^2T_{y^2}
-T_{x^2}T_{x}^2T_{y}^2-T_{x}^4T_{y^2}-2T_{x^3}T_{x}T_{y^2}
\end{align*}
The $GL_2(K)$-submodule of $\ker(\mu)$ generated by $J_{3,2}$ is two-dimensional with 
$K$-basis $J_{3,2},J_{2,3}$ where 
\begin{align*}
J_{2,3}&=6T_{xy^2}T_{xy}-3T_{x^2y}T_{y^2}-2T_{xy^2}T_{x}T_{y}
+T_{x^2y}T_{y}^2-4T_{xy}^2T_{y}
\\& +2T_{xy}T_{y}^2T_{x}
-3T_{y^3}T_{x^2}+4T_{y^2}T_{y}T_{x^2}-T_{y}^3T_{x^2}
+T_{y^3}T_{x}^2-T_{y^2}T_{y}T_{x}^2. 
\end{align*} 
The $GL_2(K)$-submodule of $\ker(\mu)$ generated by $J_{4,2}$ is three-dimensional with 
$K$-basis $J_{4,2},J_{3,3},J_{2,4}$ where 
\begin{align*} J_{3,3}
&=3T_{x^2y}T_{xy^2}-T_{xy}T_{x^2}T_{y^2}+T_{xy}^3+T_{xy}T_x^2T_{y^2} 
-5T_{xy}^2T_xT_y-3T_{x^3}T_{y^3}
\\&+2T_{xy}T_{xy^2}T_x+T_{x^2}T_xT_{y^3}
-3T_{x^2}T_{xy^2}T_y+2T_{x^2y}T_{xy}T_y+3T_{x^2}T_xT_{y^2}T_y
\\&+T_{x^3}T_{y^2}T_y
+T_{x^2}T_{xy}T_y^2-T_x^3T_{y^2}T_y+2T_x^2T_{xy}T_y^2-T_{x^2}T_xT_y^3
-3T_xT_{x^2y}T_{y^2} 
\end{align*}
\begin{align*}J_{2,4}&=6T_{xy^2}^2+T_{xy}^2T_{y^2}-3T_{xy}^2T_{y}^2-6T_{y^3}T_{x^2y}
+2T_{y^2}T_{x^2y}T_{y}
\\&+4T_{y^3}T_{xy}T_{x}
-2T_{y^2}T_{xy}T_{x}T_{y}+2T_{xy}T_{y}^3T_{x}-4T_{xy^2}T_{y^2}T_{x}
-T_{y^2}^2T_{x^2}
\\&+T_{y^2}^2T_{x}^2+4T_{y^2}T_{y}^2T_{x^2}
-T_{y^2}T_{y}^2T_{x}^2-T_{y}^4T_{x^2}-2T_{y^3}T_{y}T_{x^2}. 
\end{align*}
Relation \eqref{eq:S^2}  is just Newton's formula expressing the third elementary symmetric polynomial of $z_1^2,z_2^2,z_3^2$ in terms of their power sum symmetric functions. 
It follows from \eqref{eq:rk2invariant} that relation  \eqref{eq:S^2} together with 
$\ker(\phi')$ generate the ideal $\ker(\phi)$. Since \eqref{eq:J} is a minimal generating system of the ideal $\ker(\mu)=\ker(\varphi')$ by \cite{domokos}, \cite{domokos-puskas}, 
the elements \eqref{eq:J} together with \eqref{eq:S^2} constitute a minimal homogeneous generating system of $\ker(\phi)$. 
\end{proof}  

Relation \eqref{eq:S^2} shows that the generator $\phi(T_{y^3})=[z^6]$ of $R^{S_4}$ is redundant. 
Consider the subalgebra 
\[\mathcal{F}_1=K[T_w,S \mid w\in\{x,x^2,x^3,y,y^2,xy,x^2y,xy^2\}]\] 
of $\mathcal{F}$ in Theorem~\ref{thm:shape_relations}. 
A minimal presentation of $R^{S_4}$ in terms of generators and relations is as follows: 

\begin{corollary}\label{cor:mingen}
\begin{itemize}
\item[(i)] 
The algebra $R^{S_4}$ is  minimally generated by the nine elements 
\[[x],\ [x^2],\ [x^3],\ [z^2],\ [z^4],\ [xz^2],\ [x^2z^2],\ [xz^4],\ z_1z_2z_3.\] 
\item[(ii)]\   The kernel of 
the surjective $K$-algebra homomorphism 
\[\phi\vert_{\mathcal{F}_1}: \mathcal{F}_1\to R^{S_4}\] 
is minimally generated (as an ideal) 
by the five elements 
\[\tilde J_{3,2},\quad \tilde J_{2,3}, \quad \tilde J_{4,2},\quad \tilde J_{3,3}, \quad \tilde J_{2,4} \]
obtained from the elements \eqref{eq:J} via the substitution  
\[T_{y^3}\mapsto  3S^2+\frac 32 T_{y^2}T_{y}-\frac 12 T_{y}^3.\] 
\end{itemize}
\end{corollary}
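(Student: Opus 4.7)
The plan is to derive Corollary~\ref{cor:mingen} directly from Theorem~\ref{thm:shape_relations} by regarding $\mathcal{F}_1$ as a quotient of $\mathcal{F}$. For brevity write $G$ for the element \eqref{eq:S^2}. I would introduce the graded $K$-algebra homomorphism $\sigma:\mathcal{F}\to\mathcal{F}_1$ that fixes every variable other than $T_{y^3}$ and sends $T_{y^3}\mapsto 3S^2+\frac{3}{2}T_{y^2}T_y-\frac{1}{2}T_y^3$. A direct calculation gives $\sigma(G)=0$ and $\ker(\sigma)=(G)$, so $\sigma$ descends to an isomorphism $\mathcal{F}/(G)\cong\mathcal{F}_1$. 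Since $G\in\ker(\phi)$, one has the factorisation $\phi=\phi|_{\mathcal{F}_1}\circ\sigma$, which yields surjectivity of $\phi|_{\mathcal{F}_1}$ and the identity $\ker(\phi|_{\mathcal{F}_1})=\sigma(\ker(\phi))$. Applying $\sigma$ to the six generators of $\ker(\phi)$ listed in Theorem~\ref{thm:shape_relations}(ii) kills $G$ and produces the five elements $\tilde J_{i,j}$ prescribed by the substitution, establishing the generation claim in part~(ii).

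For the minimality statement in (i) I would show that each of the nine generators is indecomposable in the graded algebra $R^{S_4}$. Using the decomposition \eqref{eq:rk2invariant}, the $P^{S_3}$-component of a product $f_ig_i$ of positive-degree elements $f_i=\alpha_i+\beta_iz_1z_2z_3$, $g_i=\gamma_i+\delta_iz_1z_2z_3$ (with $\alpha_i,\beta_i,\gamma_i,\delta_i\in P^{S_3}$) equals $\alpha_i\gamma_i+\beta_i\delta_i(z_1z_2z_3)^2$. For each of the eight generators $[w]$ with $\deg(w)\le 5$, the summand $\beta_i\delta_i(z_1z_2z_3)^2$ has degree $\ge 6$ when nonzero, hence cannot occur in a homogeneous decomposition of $[w]$. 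Thus $[w]$ would have to be decomposable already in $P^{S_3}$, which is impossible because under the isomorphism $\tilde\psi$ from the proof of Theorem~\ref{thm:shape_relations} the eight elements $[w]$ correspond to eight members of the known minimal generating system of $T^3(K[x,y])^{S_3}$. Finally, any expression of $z_1z_2z_3$ as a sum of products of two positive-degree elements of $R^{S_4}$ of total degree $3$ involves only degree-$1$ and degree-$2$ factors, all of which lie in $P^{S_3}$; the sum would therefore lie in $P^{S_3}$, contradicting $z_1z_2z_3\in P^{S_3}z_1z_2z_3$ together with $P^{S_3}\cap P^{S_3}z_1z_2z_3=0$ from \eqref{eq:rk2invariant}.

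For the minimality of the five generators in (ii) I apply the graded Nakayama Lemma: the minimal number of generators of $\ker(\phi|_{\mathcal{F}_1})$ equals $\dim_K\ker(\phi|_{\mathcal{F}_1})/((\mathcal{F}_1)_+\ker(\phi|_{\mathcal{F}_1}))$. Since $\sigma$ is a graded surjection whose kernel is the positive-degree principal ideal $(G)$, this quotient pulls back to $\ker(\phi)/(\mathcal{F}_+\ker(\phi)+(G))$. By Theorem~\ref{thm:shape_relations}(ii) the space $\ker(\phi)/\mathcal{F}_+\ker(\phi)$ is $6$-dimensional with basis the classes of $G$ and the five $J_{i,j}$, so modding out by the class of $G$ leaves dimension~$5$. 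The only real bookkeeping care lies in this pull-back identification; once it is in place, both parts of Corollary~\ref{cor:mingen} drop out of Theorem~\ref{thm:shape_relations}.
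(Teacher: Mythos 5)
Your proposal is correct and follows exactly the route the paper leaves implicit: Corollary~\ref{cor:mingen} is stated without proof as an immediate consequence of Theorem~\ref{thm:shape_relations}, and eliminating $T_{y^3}$ via the graded surjection $\sigma$ whose kernel is the principal ideal generated by \eqref{eq:S^2} is precisely the intended mechanism. Your identifications $\ker(\phi|_{\mathcal{F}_1})=\sigma(\ker(\phi))$ and $\ker(\phi|_{\mathcal{F}_1})/(\mathcal{F}_1)_+\ker(\phi|_{\mathcal{F}_1})\cong \ker(\phi)/\bigl(\mathcal{F}_+\ker(\phi)+(G)\bigr)$ are the right way to transport minimality of the relations, and the count $6-1=5$ is correct since the class of $G$ is part of a basis of $\ker(\phi)/\mathcal{F}_+\ker(\phi)$ by Theorem~\ref{thm:shape_relations}(ii).

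One small repair in part (i): for homogeneous generators, minimality of the generating set is equivalent to linear independence of their images in $R^{S_4}_+/(R^{S_4}_+)^2$, which is strictly stronger than each generator being individually indecomposable whenever several generators share a degree --- here degrees $2$, $3$ and $4$ each carry more than one of the nine generators. Your argument in fact yields the stronger statement with no additional work: for $d\le 5$ the $P^{S_3}$-component of $\bigl((R^{S_4}_+)^2\bigr)_d$ lies in $(P^{S_3}_+)^2$ (the terms $\beta_i\delta_i(z_1z_2z_3)^2$ have degree at least $6$), while the $P^{S_3}z_1z_2z_3$-component of any product of two positive-degree elements has degree at least $4$. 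Hence a linear dependence modulo decomposables among the generators of a fixed degree would force either a nontrivial decomposable linear combination of the corresponding minimal generators of $T^3(K[x,y])^{S_3}\cong P^{S_3}$, or (in degree $3$) $z_1z_2z_3\in P^{S_3}$, both impossible. You should simply phrase the conclusion for linear combinations rather than for individual generators.
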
 

\begin{remark} {\rm A minimal generating system (as well as the so-called Hironaka decomposition) of $R^{S_4}$ was given by Aslaksen, Chan and Gulliksen \cite[Theorem 4]{aslaksen_etal} by different methods.  The authors also mention that they found the basic syzygies (relations) among the generators with the aid of computer, but the relations  turned out to be quiet complicated, and so they left it out from their paper. } 
\end{remark} 



\end{document}